\documentclass{amsart}
\usepackage{amsmath,amsthm,amssymb,IMjournal}

\usepackage{mathtools}
\usepackage[utf8]{inputenc}
\usepackage{enumitem}
\usepackage{graphicx}
\usepackage{xcolor}

\usepackage[hidelinks]{hyperref}
\hypersetup{pdftitle={Rigorous verification of period-doubling bifurcations in maps},
            pdfauthor={Olivier Henot, Jean-Philippe Lessard}}

\def\xadresa{{\small\it Authors' addresses:}\ \rm}
\newcommand{\R}{\mathbb{R}}
\newcommand{\C}{\mathbb{C}}
\newcommand{\Df}{D_x f}
\newcommand{\Dff}{D_x^2 f}
\newcommand{\norm}[1]{\left\|#1\right\|}
\newcommand{\Fpd}{F_{\mathrm{pd}}}
\newcommand{\nuc}{\nu_{\mathrm{c}}}
\newcommand{\dnuc}{\dot{\nu}_{\mathrm{c}}}
\newcommand{\Mat}{\mathrm{Mat}}
\newcommand{\ball}[2]{B_{#1}(#2)}

\newcommand{\bydef}{\,\stackrel{\mbox{\tiny\textnormal{\raisebox{0ex}[0ex][0ex]{def}}}}{=}\,}

\begin{document}

\title{Rigorous verification of period-doubling bifurcations in maps}

\author{\|Olivier |H\'enot|,
        \|Jean-Philippe |Lessard|}
\thanks
The research of O.~H\'enot was supported by the National Science and Technology Council (NSTC), Taiwan, under grant No.~115-2115-M-002-001-MY2. The research of J.-P.~Lessard was supported by the Natural Sciences and Engineering Research Council of Canada (NSERC).
\endthanks


\abstract
We present a computer-assisted approach for proving period-doubling bifurcations in one-parameter families of maps $x \mapsto f(x,\lambda)$ on $\R^n$.
We solve a square augmented system, called the \emph{period-doubling map}, whose unknowns are the fixed point, the parameter, the right eigenvector attached to the eigenvalue $-1$, the left eigenvector with its eigenvalue, and the vector which describes the center manifold to second order.
A contraction argument gives a solution close to a numerical approximation, at which the Jacobian is invertible.
The determinant of this Jacobian factors into a simple expression, so that it is nonzero exactly when $1$ is not an eigenvalue and $-1$ is a simple eigenvalue crossing the unit circle transversally.
Apart from the sign of the cubic normal-form coefficient, computed afterwards, it remains to check with Gershgorin disks that no other eigenvalue lies on the unit circle.
The method is applied to a $10$-mode Galerkin projection of the Kot--Schaffer growth-dispersal model, for which we prove a supercritical period-doubling bifurcation.
\endabstract

\keywords
period-doubling bifurcation, flip bifurcation, computer-assisted proof,
interval arithmetic, Newton--Kantorovich Theorem, normal form,
Kot--Schaffer model
\endkeywords

\subjclass
37C25, 37G15, 37M20, 65G20, 65P30
\endsubjclass

\section{Introduction}\label{sec:intro}

A period-doubling bifurcation, also called a flip bifurcation, is a codimension-one bifurcation at which a simple eigenvalue of the Jacobian matrix crosses $-1$, with no other eigenvalues on the unit circle, and a branch of period-$2$ orbits emerges from a branch of fixed points.
A cascade of such bifurcations is a well-known route to chaos, with the universal scaling discovered by Feigenbaum~\cite{Feigenbaum1978}.
In practice, period-doubling bifurcations are detected by numerical continuation.
A rigorous proof then necessitates locating the bifurcation point and verifying that the critical eigenvalue crosses $-1$ transversally.
Establishing nondegeneracy further requires showing that a cubic normal-form coefficient, involving derivatives up to third order, does not vanish; its sign determines the direction of the branch and the stability of the period-$2$ orbits~\cite{Kuznetsov2004}.

In this paper, we propose a computer-assisted approach to rigorously verify these conditions for one-parameter families of maps $x\mapsto f(x,\lambda)$ on $\R^n$.
We introduce an augmented system, the \emph{period-doubling map} $\Fpd:\R^{4n+2}\to\R^{4n+2}$ (Section~\ref{sec:Fpd}), whose unknowns are the fixed point, the critical right eigenvector, the parameter, the left eigenvector and its eigenvalue, and a center-manifold correction.
Using a contraction argument for a quasi-Newton fixed-point operator, we obtain sufficient conditions, which can be checked by computer, for the existence of a zero of $\Fpd$ in an explicit closed ball centered at a numerical approximation whose radius yields a rigorous error bound.

A benefit of this strategy is that the invertibility of $D\Fpd$ at a zero, which follows from the contraction argument, already encodes most of the required spectral information.
Indeed, an explicit determinant formula shows that $D\Fpd$ is invertible at a zero if and only if $1$ is not an eigenvalue, $-1$ is an algebraically simple eigenvalue, and it crosses the unit circle transversally along the branch of fixed points (Theorem~\ref{thm:main}).
Using the explicit error bound on the zero, we then verify via the Gershgorin Disks Theorem that no other eigenvalue lies on the unit circle.
In contrast with the augmented systems used for saddle-node and cusp bifurcations~\cite{Lessard2016,LessardPugliese2025}, the invertibility of $D\Fpd$ does not imply that the normal-form coefficient is nonzero.
We therefore compute a posteriori a rigorous enclosure of this coefficient using interval arithmetic, and verify that it excludes zero.
Together, these steps prove the existence of a nondegenerate period-doubling bifurcation and determine the direction of the branch and the stability of the period-$2$ orbits on the center manifold.

The use of computer-assisted proofs (CAPs) to study bifurcations has a long
history. In the context of period doubling, Lanford's proof of the
Feigenbaum conjectures~\cite{Lanford1982} established, using rigorous
computer estimates, the existence of a fixed point of the period-doubling
renormalization operator. More recently, Burbanks, Osbaldestin and
Thurlby~\cite{BurbanksOsbaldestinThurlby2021} obtained rigorous bounds for
this operator's fixed point and associated eigenfunctions for unimodal maps
with a critical point of degree four. Another well-known example in dynamics is
Tucker's proof that the Lorenz equations have a strange
attractor~\cite{Tucker2002}, which combines a rigorous ordinary differential equation solver with a
normal-form analysis near the origin. Such proofs often validate numerical approximations by an a~posteriori
argument~\cite{VandenBergLessard2015,DayLessardMischaikow2007}, using
interval arithmetic to control rounding errors.

For local bifurcations, augmented systems are already used in numerical
continuation to locate bifurcation points and compute normal-form coefficients~\cite{Govaerts2000,GovaertsKhoshsiarKuznetsovMeijer2007,KuznetsovMeijer2019,MooreSpence1980}.
Early interval validation methods for augmented systems were developed by Kanzawa and Oishi~\cite{KanzawaOishi1999} for several types of bifurcation
points, and by Tanaka, Murashige and Oishi~\cite{TanakaMurashigeOishi2004}
for double turning points. More recent work includes saddle-node
bifurcations~\cite{Lessard2016}, Hopf bifurcations via
desingularization~\cite{VandenBergLessardQueirolo2021}, and cusp
bifurcations~\cite{LessardPugliese2025}. Other CAPs exist for saddle-node bifurcations in regulatory network
dynamics~\cite{KepleyMischaikowQueirolo2024}, for Hopf
bifurcations~\cite{VandenBergQueirolo2022,ChurchLessard2022,ChurchQueirolo2024},
and for cocoon bifurcations~\cite{KokubuWilczakZglincinski2007}.
For discrete dynamical systems, Kamimoto, Kim, Sander and
Wanner~\cite{KamimotoKimSanderWanner2022} validated saddle-node and
Neimark--Sacker bifurcations in an age-structured red coral population model.
A different approach, based on the Conley index, was developed by
Matsue~\cite{Matsue2011} to detect bifurcations of semiflows with rigorous
numerics. In the
context of partial differential equations, rigorous numerical methods have been used to study
steady-state bifurcations~\cite{Wanner2018,Zglinczinski2015}, as well as to
continue bifurcation points and solution
manifolds~\cite{GameiroLessardPugliese2016,LessardSanderWanner2017}.
Combining the present system with such continuation methods to
follow curves of period-doubling points in two-parameter families, up to
codimension-two points such as the fold--flip
interaction~\cite{KuznetsovMeijerVanVeen2004}, is left for future work.

Of particular relevance to the present work is the CAP
of period-doubling bifurcations in the R\"ossler system
by Wilczak and Zgliczy\'nski~\cite{WilczakZglincinski2009}. There, the authors use a
Lyapunov--Schmidt reduction of the period-$2$ problem for a Poincar\'e map,
whose derivatives are enclosed by a rigorous $C^r$ integrator.
Walawska and Wilczak~\cite{WalawskaWilczak2019} developed a validation
framework for period-tupling and touch-and-go bifurcations of symmetric
periodic orbits in reversible systems, with applications to halo orbits in
the circular restricted three-body problem.
Our approach instead works directly with a square augmented system for the bifurcation point and the auxiliary vectors, requiring enclosures of $f$ and of its derivatives up to third order.
The same system therefore applies to the iterates $f^{2^k}$, and to Poincar\'e maps once these enclosures are supplied by a rigorous integrator~\cite{KapelaWilczakZgliczynski2022,Tucker2002,WilczakZglincinski2009}.

\smallskip
\paragraph{\bfseries Software resources.}
The code accompanying this article, available at~\cite{Code}, is written in Julia~\cite{Julia-2017}, using the \texttt{RadiiPolynomial}~\cite{RadiiPolynomial.jl} and \texttt{IntervalArithmetic}~\cite{IntervalArithmetic.jl} software libraries.
The figure in this article was made with \texttt{Makie}~\cite{Makie.jl}.
\smallskip

The paper is organized as follows. Section~\ref{sec:PD} recalls the bifurcation conditions and the normal-form coefficient, with proofs in Appendix~\ref{app:cm}.
Sections~\ref{sec:Fpd} and~\ref{sec:jacobian} introduce the augmented system and characterize the zeros at which its Jacobian is invertible.
Section~\ref{sec:procedure} presents the CAP procedure, which Section~\ref{sec:ks} applies to a $10$-mode Galerkin projection of the Kot--Schaffer model (Theorem~\ref{thm:ks_pd}).

\section{Period-doubling bifurcations}\label{sec:PD}

Let $f:\R^n\times\R\to\R^n$ be of class $C^3$ and consider the
parameter-dependent discrete dynamical system
\begin{equation}\label{eq:map}
x_{k+1} = f(x_k,\lambda), \qquad x_k\in\R^n,\quad \lambda\in\R.
\end{equation}
For a fixed value of $\lambda$, a fixed point satisfies $f(x,\lambda)=x$,
and its stability is governed by the eigenvalues of $\Df(x,\lambda)$.
We are interested in a period-$2$ orbit, that is, two distinct points $x_1,x_2 \in \R^n$ with $f(x_1,\lambda)=x_2$ and $f(x_2,\lambda)=x_1$, emerging from a branch of fixed points as $\lambda$ varies.
At the bifurcation, both $x_1$ and $x_2$ coincide with the fixed point, and $\Df$ has the \emph{critical} eigenvalue $-1$.
The presence of this eigenvalue alone is not sufficient for such a bifurcating family to exist.

To make this precise, letting $(x_\star,\lambda_\star)$ be a candidate bifurcation point, we use the following notation for the derivatives evaluated at this point:
\begin{equation}\label{eq:notation}
  \begin{gathered}
    A \bydef  \Df(x_\star,\lambda_\star), \qquad
    B \bydef  \Dff(x_\star,\lambda_\star), \\
    C \bydef  D_x^3f(x_\star,\lambda_\star), \qquad
    E \bydef A+I, \\
    f_\lambda \bydef  \partial_\lambda f(x_\star,\lambda_\star), \qquad
    A_\lambda \bydef  \partial_\lambda \Df(x_\star,\lambda_\star), \qquad
    B_\lambda \bydef  \partial_\lambda \Dff(x_\star,\lambda_\star).
  \end{gathered}
\end{equation}
In words, $B$ and $C$ are the symmetric bilinear and trilinear maps given by the successive derivatives of $f$ with respect to $x$, while $f_\lambda$, $A_\lambda$ and $B_\lambda$ are partial derivatives with respect to the parameter.
For $u\in\R^n$, $B(u,\cdot)$ denotes the matrix $z\mapsto B(u,z)=B(z,u)$.
We denote the spectrum of $A$ by $\sigma(A)$.

\subsection*{Bifurcation conditions}

We recall the classical conditions for a period-doubling bifurcation~\cite{Kuznetsov2004,KuznetsovMeijer2019}.
The starting point is a fixed point with a simple eigenvalue $-1$ and no other eigenvalue on the unit circle.
\begin{enumerate}[label=\textbf{(C\arabic*)}, ref=(C\arabic*), leftmargin=*]
\item\label{cond:fp} $f(x_\star,\lambda_\star) = x_\star$.
\item\label{cond:eig} $-1$ is an algebraically simple eigenvalue of $A$, with right and left eigenvectors $v_\star$ and $w_\star$, $Av_\star=-v_\star$, $A^Tw_\star=-w_\star$, normalized by $v_\star^Tv_\star=1$ and $w_\star^Tv_\star=1$.
\item\label{cond:spec} $A$ has no eigenvalue other than $-1$ on the unit circle.
\end{enumerate}
Since condition~\ref{cond:spec} excludes the eigenvalue $1$, that is, $1\notin\sigma(A)$, the Implicit Function Theorem provides a $C^3$ branch of fixed points $\lambda \mapsto x(\lambda)$ with $x(\lambda_\star)=x_\star$, and differentiating $f(x(\lambda),\lambda)=x(\lambda)$ at $\lambda_\star$ gives
\begin{equation}\label{eq:xdot}
\dot x\bydef\frac{\mathrm{d}x}{\mathrm{d}\lambda}(\lambda_\star)=-(A-I)^{-1}f_\lambda .
\end{equation}
Under~\ref{cond:eig}, the eigenvalue $-1$ then continues along this branch as a $C^2$ eigenpair $(\nuc(\lambda),v(\lambda))$ of $\Df(x(\lambda),\lambda)$ with $\nuc(\lambda_\star)=-1$ and $v(\lambda_\star)=v_\star$.
Indeed, the Implicit Function Theorem applies to $(v,\nu)\mapsto\bigl(\Df(x(\lambda),\lambda)v-\nu v,\ v^Tv-1\bigr)$ at $(v_\star,-1)$, whose Jacobian $(u,s)\mapsto(Eu-sv_\star,\ 2v_\star^Tu)$ is injective.
This is seen directly from the fact that if $Eu=sv_\star$ and $v_\star^Tu=0$, then $s=w_\star^TEu=0$ since $E^Tw_\star=0$ and $w_\star^Tv_\star=1$, so $u\in\ker E=\operatorname{span}(v_\star)$ and $v_\star^Tu=0$ gives $u=0$.
Finally, under~\ref{cond:fp}--\ref{cond:spec}, the restriction of $f(\cdot,\lambda_\star)$ to a center manifold of $x_\star$~\cite[Ch.~5]{Kuznetsov2004} is a scalar map with derivative $-1$ at the fixed point, and the following proposition, proved in Appendix~\ref{app:c}, gives the cubic coefficient of its second iterate.

\begin{proposition}\label{prop:c}
Assume~\ref{cond:fp}--\ref{cond:spec} and let
\begin{equation}\label{eq:h}
h_\star \bydef  -(A-I)^{-1}B(v_\star,v_\star).
\end{equation}
On a center manifold of $x_\star$ at $\lambda=\lambda_\star$, parametrized by a $C^3$ map
$P(\xi)=x_\star+v_\star \, \xi +O(\xi^2)$, the restricted map $g$, defined by $f(P(\xi),\lambda_\star)=P(g(\xi))$, satisfies $g(g(\xi))=\xi+c\,\xi^3+o(\xi^3)$, where
\begin{equation}\label{eq:c_formula}
c = -\tfrac{1}{3}\,w_\star^TC(v_\star,v_\star,v_\star) - w_\star^TB(v_\star,h_\star).
\end{equation}
\end{proposition}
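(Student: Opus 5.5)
Write $P(\xi)=x_\star+v_\star\xi+p_2\xi^2+p_3\xi^3+o(\xi^3)$ and $g(\xi)=-\xi+a_2\xi^2+a_3\xi^3+o(\xi^3)$. The linear coefficient of $g$ is $-1$ because the tangent space of the center manifold is $\operatorname{span}(v_\star)$ and $Av_\star=-v_\star$. Both $P$ and $g$ are $C^3$, since $g=\pi\circ f(P(\cdot),\lambda_\star)$ for a projection $\pi$ onto the $v_\star$-coordinate. The plan is to compute the unknowns order by order from the invariance equation $f(P(\xi),\lambda_\star)=P(g(\xi))$, and then read off $g\circ g$.

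For the composition, a direct expansion gives
\begin{equation*}
g(g(\xi))=\xi+(a_2-a_2)\xi^2+\bigl(-a_3-2a_2^2-a_3\bigr)\xi^3+o(\xi^3).
\end{equation*}
To check this: $g(g)=-g+a_2g^2+a_3g^3$. Here $-g$ contributes $\xi-a_2\xi^2-a_3\xi^3$, the term $a_2g^2$ contributes $a_2\xi^2-2a_2^2\xi^3$, and $a_3g^3$ contributes $-a_3\xi^3$. Hence
\begin{equation*}
c=-2(a_3+a_2^2),
\end{equation*}
and it remains to find $a_2$ and $a_3$.

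Expand $f(x_\star+y)=x_\star+Ay+\tfrac12B(y,y)+\tfrac16C(y,y,y)+o(|y|^3)$ with $y=v_\star\xi+p_2\xi^2+p_3\xi^3$, and expand $P(g(\xi))$ similarly.

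At order $\xi^2$, the left side gives $Ap_2+\tfrac12B(v_\star,v_\star)$ and the right side gives $a_2v_\star+p_2$. So
\begin{equation*}
(A-I)p_2=a_2v_\star-\tfrac12B(v_\star,v_\star).
\end{equation*}
Since $1\notin\sigma(A)$, this is solvable for any $a_2$. Using $(A-I)v_\star=-2v_\star$ it gives
\begin{equation*}
p_2=-\tfrac{a_2}{2}v_\star+\tfrac12h_\star.
\end{equation*}
We may exploit the freedom in the parametrization: a reparametrization $\xi\mapsto\xi+\beta\xi^2$ shifts the $v_\star$-component of $p_2$. Alternatively, and more robustly, avoid fixing a normalization and instead verify at the end that $a_3+a_2^2$ is invariant, which it must be because $c$ is a conjugacy invariant of $g\circ g$ under smooth changes $\xi\mapsto\xi+O(\xi^2)$.

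At order $\xi^3$, the left side gives $Ap_3+B(v_\star,p_2)+\tfrac16C(v_\star,v_\star,v_\star)$. The right side gives $-p_3+a_3v_\star+p_2\cdot(-2a_2)$, using $g^2=\xi^2-2a_2\xi^3+\dots$ and $g^3=-\xi^3+\dots$, which produces
\begin{equation*}
a_3v_\star-2a_2p_2-p_3.
\end{equation*}
Rearranging, $Ep_3=a_3v_\star-2a_2p_2-B(v_\star,p_2)-\tfrac16C(v_\star,v_\star,v_\star)$. Apply $w_\star^T$. Since $w_\star^TE=0$ and $w_\star^Tv_\star=1$, this is a solvability condition:
\begin{equation*}
0=a_3-2a_2\,w_\star^Tp_2-w_\star^TB(v_\star,p_2)-\tfrac16w_\star^TC(v_\star,v_\star,v_\star).
\end{equation*}
Substitute $p_2=-\tfrac{a_2}{2}v_\star+\tfrac12h_\star$. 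Then $-2a_2w_\star^Tp_2=a_2^2-a_2w_\star^Th_\star$, and
\begin{equation*}
-w_\star^TB(v_\star,p_2)=\tfrac{a_2}{2}w_\star^TB(v_\star,v_\star)-\tfrac12w_\star^TB(v_\star,h_\star).
\end{equation*}

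The needed value of $a_2$ comes from applying $w_\star^T$ to the $\xi^2$ equation. We have $w_\star^T(A-I)=-2w_\star^T$, so $-2w_\star^Tp_2=a_2-\tfrac12w_\star^TB(v_\star,v_\star)$. We also have $w_\star^Th_\star=-w_\star^T(A-I)^{-1}B(v_\star,v_\star)=\tfrac12w_\star^TB(v_\star,v_\star)$. Together these are consistent with $w_\star^Tp_2=-\tfrac{a_2}{2}+\tfrac14w_\star^TB(v_\star,v_\star)$.

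Collecting terms, the $a_2$-linear contributions are
\begin{equation*}
-a_2\,w_\star^Th_\star+\tfrac{a_2}{2}w_\star^TB(v_\star,v_\star)=0,
\end{equation*}
so they cancel. The solvability condition then reads
\begin{equation*}
a_3+a_2^2=\tfrac12w_\star^TB(v_\star,h_\star)+\tfrac16w_\star^TC(v_\star,v_\star,v_\star).
\end{equation*}
Multiplying by $-2$ yields~\eqref{eq:c_formula}. This cancellation is exactly the parametrization invariance, and it answers the normalization worry.

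The main obstacle is bookkeeping rather than ideas. One has to make sure the expansion of $P(g(\xi))$ to third order is correct, in particular the cross term $-2a_2p_2$ and the sign from $g^3=-\xi^3$. One also has to justify that $P$ and $g$ admit third-order Taylor expansions, with $o(\xi^3)$ remainders, from mere $C^3$ regularity; this follows from Taylor's theorem applied to the $C^3$ maps involved.
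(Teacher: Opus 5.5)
Your proposal is correct and is essentially the paper's proof. Like the paper, you expand $f(P(\xi),\lambda_\star)=P(g(\xi))$ to third order, solve the $\xi^2$ equation as $P_2=\tfrac12h_\star-\tfrac12a_2v_\star$, apply $w_\star^T$ to the $\xi^3$ equation to annihilate $(A+I)P_3$, observe that the $a_2$-linear terms cancel so that $a_3+a_2^2=\tfrac12w_\star^TB(v_\star,h_\star)+\tfrac16w_\star^TC(v_\star,v_\star,v_\star)$, and read off $c=-2(a_3+a_2^2)$ from $g\circ g$.

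Two minor slips do not affect the argument. First, $g=\pi\circ f(P(\cdot),\lambda_\star)$ holds only when $\pi\circ P$ is the identity; in general you must compose with the inverse of the local $C^3$ diffeomorphism $\xi\mapsto\pi(P(\xi))$. Second, the $\xi^2$ equation supplies no ``needed value'' of $a_2$: it is fixed by the choice of $P$, and your cancellation shows it is not needed at all.
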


Importantly, the sign of $c$ is intrinsic, in the sense that $c$ depends neither on the center manifold nor on the choice of $P$ in Proposition~\ref{prop:c} (Appendix~\ref{app:c}), while replacing $v_\star,w_\star$ by $\alpha v_\star,w_\star/\alpha$ only multiplies it by $\alpha^2>0$.
In the normal form $\eta\mapsto-\eta+c_{\mathrm K}\eta^3$ of the restricted map in~\cite{Kuznetsov2004}, one has $c=-2c_{\mathrm K}$, the factor $-2$ coming from the second iterate.

The two further conditions for a period-doubling bifurcation read as follows.
\begin{enumerate}[label=\textbf{(C\arabic*)}, ref=(C\arabic*), leftmargin=*, resume]
\item\label{cond:trans} The critical eigenvalue $\nuc(\lambda)$ crosses the unit circle transversally:
\[
\dnuc\bydef \frac{\mathrm{d}\nuc}{\mathrm{d}\lambda}(\lambda_\star)\neq0.
\]
\item\label{cond:nd} The normal-form coefficient $c$ in~\eqref{eq:c_formula} is nonzero.
\end{enumerate}

Under these five conditions, we say that $f$ undergoes a \emph{nondegenerate period-doubling bifurcation} at $(x_\star,\lambda_\star)$.
The following proposition, proved in Appendix~\ref{app:side}, shows that a period-$2$ orbit then bifurcates and gives its side and stability.

\begin{proposition}[The bifurcating $2$-cycle]\label{prop:side}
Assume~\ref{cond:fp}--\ref{cond:nd}, let $P(\xi,\lambda)$ be a $C^3$ parametrization of a center manifold of $(x,\lambda)\mapsto(f(x,\lambda),\lambda)$ at $(x_\star,\lambda_\star)$ with $P(0,\lambda)=x(\lambda)$ and $\partial_\xi P(0,\lambda_\star)=v_\star$, and let $g(\xi,\lambda)$ be the restricted map, so that $g(0,\lambda)=0$ and $\partial_\xi g(0,\lambda)=\nuc(\lambda)$.
For $\lambda$ near $\lambda_\star$, $g(\cdot,\lambda)$ has a period-$2$ orbit near $0$ if and only if $\dnuc(\lambda-\lambda_\star)/c>0$, that is, where $\nuc(\lambda)<-1$ if $c<0$ and where $\nuc(\lambda)>-1$ if $c>0$.
This orbit is unique, and it is attracting if and only if $c<0$.
\end{proposition}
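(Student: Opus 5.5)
We reduce to a scalar problem for the second iterate on the center manifold. Set $G(\xi,\lambda)\bydef g(g(\xi,\lambda),\lambda)$. Period-$2$ orbits of $g(\cdot,\lambda)$ near $0$ correspond to nonzero zeros $\xi$ of $G(\xi,\lambda)-\xi$ with $g(\xi,\lambda)\neq\xi$.

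\textbf{Expanding $G$.} Since $\partial_\xi g(0,\lambda)=\nuc(\lambda)$ is close to $-1$, the point $0$ is the only fixed point of $g(\cdot,\lambda)$ near $0$. Hence every nonzero zero is a genuine $2$-cycle $\{\xi,g(\xi,\lambda)\}$. We write
\[
g(\xi,\lambda)=\nuc(\lambda)\xi+a(\lambda)\xi^2+b(\lambda)\xi^3+o(\xi^3),
\]
with remainder uniform in $\lambda$, and compute
\[
G(\xi,\lambda)-\xi=\bigl(\nuc^2-1\bigr)\xi+\nuc a(1+\nuc)\xi^2+\bigl(\ldots\bigr)\xi^3+o(\xi^3).
\]
The quadratic coefficient vanishes at $\lambda_\star$. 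The cubic coefficient at $\lambda_\star$ equals $c$, by Proposition~\ref{prop:c} applied to the slice $\lambda=\lambda_\star$. That slice is a center manifold of $f(\cdot,\lambda_\star)$ with $\partial_\xi P=v_\star$, and $c$ is independent of the parametrization. Because $G(-\,\cdot)$ is not odd in general, we use the symmetry that the involution $\xi\mapsto g(\xi,\lambda)$ maps zeros to zeros. Then we factor
\[
G(\xi,\lambda)-\xi=\xi\,H(\xi,\lambda),
\]
where $H$ is $C^2$ (by Hadamard's lemma, as $G$ is $C^3$) and
\[
H(\xi,\lambda)=\nuc(\lambda)^2-1+\alpha(\lambda)\xi+c(\lambda)\xi^2+o(\xi^2),\qquad \alpha(\lambda_\star)=0 .
\]

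\textbf{Main step: solving $H=0$.} At $(0,\lambda_\star)$ we have
\[
\partial_\lambda H=2\nuc\dnuc=-2\dnuc\neq0
\]
by~\ref{cond:trans}. The Implicit Function Theorem therefore gives the solution set locally as a graph $\lambda=\Lambda(\xi)$. The key claim, and the main obstacle, is that $\Lambda'(0)=0$ and $\Lambda''(0)=c/\dnuc$ up to a positive factor, namely
\[
\Lambda(\xi)=\lambda_\star+\frac{c}{2\dnuc}\xi^2+o(\xi^2).
\]
This needs $\partial_\xi H(0,\lambda_\star)=\alpha(\lambda_\star)=0$, which follows from the vanishing quadratic term. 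It also needs the second-order bookkeeping, where only $\partial_\xi^2H=2c$ and $\partial_\lambda H$ enter at leading order. Mixed derivatives contribute $O(\xi^3)$ since $\Lambda'(0)=0$. I would check the sign carefully:
\[
-2\dnuc(\Lambda-\lambda_\star)+c\xi^2\approx0\quad\Longrightarrow\quad \Lambda-\lambda_\star\approx\frac{c\,\xi^2}{2\dnuc},
\]
so solutions exist exactly when $\dnuc(\lambda-\lambda_\star)/c>0$. For each such $\lambda$ there are exactly two values $\xi_\pm$ (one of each sign), and these form the single orbit, since $g$ swaps them. This gives uniqueness. Translating via $\nuc(\lambda)+1\approx\dnuc(\lambda-\lambda_\star)$ gives $\nuc<-1$ when $c<0$ and $\nuc>-1$ when $c>0$.

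\textbf{Stability.} The multiplier of the $2$-cycle is
\[
\partial_\xi G(\xi_\pm,\lambda)=1+\xi H_\xi+H=1+\xi_\pm\partial_\xi H(\xi_\pm,\lambda),
\]
using $H=0$ on the cycle. Here
\[
\partial_\xi H(\xi_\pm,\lambda)=\alpha(\lambda)+2c\,\xi_\pm+o(\xi),
\]
and $\alpha(\lambda)=O(\lambda-\lambda_\star)=O(\xi^2)$. Hence the multiplier is $1+2c\,\xi_\pm^2+o(\xi^2)$, which is less than $1$ in modulus exactly when $c<0$. So the orbit is attracting if and only if $c<0$. Finally, attraction on the center manifold is the meaning intended, since~\ref{cond:spec} leaves hyperbolic directions transverse to it.
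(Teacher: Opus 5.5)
Your proposal is correct and follows essentially the paper's proof. Both arguments factor $g^2(\xi,\lambda)-\xi=\xi\,H(\xi,\lambda)$ with $H\in C^2$, solve $H=0$ as a graph $\lambda=\Lambda(\xi)$ via the Implicit Function Theorem using $\partial_\lambda H(0,\lambda_\star)=-2\dnuc$, and use $\partial_\xi H=0$ and $\partial_\xi^2H=2c$ (from Proposition~\ref{prop:c} on the slice $\lambda=\lambda_\star$) to get $\Lambda(\xi)=\lambda_\star+\frac{c}{2\dnuc}\xi^2+o(\xi^2)$ and the multiplier $1+2c\,\xi_\pm^2+o(\xi_\pm^2)$. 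The one step you leave implicit is that ``exactly one zero of each sign'' needs $\Lambda$ strictly monotone on each side of $0$; this follows from $\Lambda'(\xi)=\frac{c}{\dnuc}\xi+o(\xi)$, which the paper states explicitly.
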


The bifurcation is called \emph{supercritical} if $c<0$ and \emph{subcritical} if $c>0$.
In the supercritical case, the $2$-cycle is moreover attracting in $\R^n$, for $\lambda$ sufficiently close to $\lambda_\star$, if all other eigenvalues of $A$ lie inside the unit disk.
For smooth maps Proposition~\ref{prop:side} is the classical period-doubling theorem~\cite{Kuznetsov2004}; the proof in Appendix~\ref{app:side} only uses $C^3$ regularity.

\section{The period-doubling map}\label{sec:Fpd}

The conditions of Section~\ref{sec:PD} involve a fixed point, its critical
right and left eigenvectors, and the vector $h_\star$ of~\eqref{eq:h}.
We compute and validate these quantities simultaneously by writing them
as unknowns of a single system of equations. The following definition
introduces the map to which we will apply the Newton--Kantorovich Theorem.

\begin{definition}\label{def:Fpd}
The \emph{period-doubling map}
$\Fpd:\R^n\times\R^n\times\R\times \R^n\times\R \times\R^n\to\R^{4n+2}$ is
\begin{equation}\label{eq:Fpd}
\Fpd(x,v,\lambda,w,\mu,h) \bydef
\begin{pmatrix}
f(x,\lambda) - x \\
\Df(x,\lambda)\,v + v \\
v^T v - 1 \\
\Df(x,\lambda)^T w - \mu\,w \\
w^T v - 1 \\
\Df(x,\lambda)\,h - h + \Dff(x,\lambda)(v,v)
\end{pmatrix}.
\end{equation}
\end{definition}

The unknown $X=(x,v,\lambda,w,\mu,h)$ belongs to $\R^{4n+2}$, so the system is
square. The first three rows of~\eqref{eq:Fpd} form the classical defining
system of dimension $2n+1$ for a period-doubling
point~\cite{Govaerts2000,KuznetsovMeijer2019}. The remaining rows determine
$w$ and $h$, which we include to enclose the quantities needed to evaluate the
normal-form coefficient.
The unknown scalar $\mu$ is the eigenvalue paired with the left eigenvector $w$.
It is introduced to balance the number of equations and unknowns, since the left-eigenvector equation and its normalization contribute $n+1$ equations for the $n$ components of $w$.
The following lemma shows that $\mu_\star=-1$ at every zero of $\Fpd$, so that $\mu_\star$ coincides with the critical eigenvalue $\nuc(\lambda_\star)$ of~\ref{cond:trans}.

\begin{lemma}\label{lem:zero_structure}
Let $X_\star=(x_\star,v_\star,\lambda_\star,w_\star,\mu_\star,h_\star)$ be a zero of $\Fpd$. Then $\mu_\star=-1$,
$x_\star$ is a fixed point, $-1\in\sigma(A)$ with $Av_\star=-v_\star$, $A^Tw_\star=-w_\star$,
$v_\star^Tv_\star=w_\star^Tv_\star=1$, and $(A-I)h_\star=-B(v_\star,v_\star)$.
If moreover~\ref{cond:eig}--\ref{cond:spec} hold, then $h_\star$ is the vector
in~\eqref{eq:h} and~\eqref{eq:c_formula} gives the coefficient $c$ of Proposition~\ref{prop:c}. Conversely, if
$(x_\star,\lambda_\star)$ satisfies~\ref{cond:fp}--\ref{cond:spec}, then it is
the $(x,\lambda)$-component of exactly two zeros of $\Fpd$, namely
$(x_\star,\pm v_\star,\lambda_\star,\pm w_\star,-1,h_\star)$.
\end{lemma}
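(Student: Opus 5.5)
The plan is to read off each row of $\Fpd(X_\star)=0$, then handle the two additional claims (identification of $h_\star$ and $c$, and the converse) separately.

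\emph{Reading the rows.} Rows one and two give $f(x_\star,\lambda_\star)=x_\star$ and $Av_\star=-v_\star$. Row three gives $v_\star^Tv_\star=1$, so $v_\star\neq0$ and $-1\in\sigma(A)$. Rows four and five give $A^Tw_\star=\mu_\star w_\star$ and $w_\star^Tv_\star=1$. The key step is the identity
\[
\mu_\star = \mu_\star\, w_\star^Tv_\star = (A^Tw_\star)^Tv_\star = w_\star^TAv_\star = -w_\star^Tv_\star=-1,
\]
which shows $\mu_\star=-1$ and hence $A^Tw_\star=-w_\star$. Row six reads $(A-I)h_\star+B(v_\star,v_\star)=0$.

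\emph{Identifying $h_\star$ and $c$.} If in addition \ref{cond:eig}--\ref{cond:spec} hold, then $1\notin\sigma(A)$, so $A-I$ is invertible. Solving row six gives $h_\star=-(A-I)^{-1}B(v_\star,v_\star)$, which is exactly \eqref{eq:h}. The vectors $v_\star,w_\star$ satisfy the normalizations required in \ref{cond:eig}. Proposition~\ref{prop:c} therefore applies with these vectors, and \eqref{eq:c_formula} gives $c$.

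\emph{Converse.} Assume \ref{cond:fp}--\ref{cond:spec}. By simplicity of $-1$, $\ker E$ is one-dimensional, so the solutions of $Av=-v$ with $v^Tv=1$ are exactly $\pm v_\star$. Given $v=\pm v_\star$, any zero must have $\mu=-1$ by the first part, so $w\in\ker E^T$. This kernel is also one-dimensional and spanned by $w_\star$, so $w=\alpha w_\star$. The condition $w^Tv=1$ then forces $\alpha=\pm1$, with the same sign as $v$. Finally, $h$ is uniquely determined by $(A-I)h=-B(v,v)$. Since $B(v,v)$ is quadratic in $v$, we have $B(-v_\star,-v_\star)=B(v_\star,v_\star)$, so $h=h_\star$ for both signs. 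Conversely, both tuples $(x_\star,\pm v_\star,\lambda_\star,\pm w_\star,-1,h_\star)$ satisfy all six rows directly, so there are exactly two zeros.

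\emph{Main point of care.} The one place needing justification is that $\ker E^T$ is one-dimensional. This holds because $\dim\ker E^T=\dim\ker E=1$, and the geometric multiplicity of $-1$ is one because it is algebraically simple. The rest is direct substitution.
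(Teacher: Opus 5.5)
Your proof is correct and follows the paper's argument essentially line by line: you pair the fourth row with $v_\star$ to get $\mu_\star=-1$, use invertibility of $A-I$ from \ref{cond:spec} to identify $h_\star$, and use the sign-matching argument for the converse. Your extra justification that $\ker E^T$ is one-dimensional (geometric multiplicity is at most the algebraic multiplicity) and your explicit check that both tuples are zeros fill in details the paper leaves implicit.
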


\begin{proof}
The first, second, third and fifth rows of~\eqref{eq:Fpd} give the fixed-point equation, $Av_\star=-v_\star$, and the two normalizations.
Multiplying the fourth row by $v_\star^T$ therefore yields
\[
0=v_\star^TA^Tw_\star-\mu_\star\,v_\star^Tw_\star=(Av_\star)^Tw_\star-\mu_\star=-1-\mu_\star,
\]
so $\mu_\star=-1$ and $A^Tw_\star=-w_\star$. The sixth row gives
$(A-I)h_\star=-B(v_\star,v_\star)$.
Under~\ref{cond:spec}, $1\notin\sigma(A)$, so $A-I$ is invertible and $h_\star$ is given by~\eqref{eq:h}.
Since~\ref{cond:fp} holds at every zero, Proposition~\ref{prop:c} applies under~\ref{cond:eig}--\ref{cond:spec} and gives~\eqref{eq:c_formula}.
Conversely, under~\ref{cond:fp}--\ref{cond:spec}, the second and third rows
give $v=\pm v_\star$ since $-1$ is simple, the fourth and fifth rows then give
$\mu=-1$ and $w=\pm w_\star$ with the same sign, and the sixth row gives
$h=h_\star$ since $A-I$ is invertible and $B(v_\star,v_\star)=B(-v_\star,-v_\star)$.
\end{proof}

\section{Invertibility of the Jacobian at a zero}\label{sec:jacobian}

A successful Newton--Kantorovich argument, discussed in Section~\ref{sec:procedure}, will provide a zero of $\Fpd$
at which the Jacobian is invertible. We now show what this implies
for the bifurcation conditions. To do so, we compute the determinant of
the Jacobian and express it in terms of the eigenvalues of $A$ and
the crossing speed of the critical eigenvalue. At a zero
$X_\star=(x_\star,v_\star,\lambda_\star,w_\star,-1,h_\star)$, the derivative
$D\Fpd(X_\star)$ maps $\hat X=(\hat x,\hat v,\hat\lambda,\hat w,\hat\mu,\hat h)$ to
\begin{equation}\label{eq:DFpd}
\begin{pmatrix}
(A-I)\hat{x} + f_\lambda\,\hat\lambda \\
B(\hat{x},v_\star) + E\hat{v} + A_\lambda v_\star\,\hat\lambda \\
2\,v_\star^T\hat{v} \\
B(\hat{x},\cdot)^Tw_\star + E^T\hat{w} - w_\star\,\hat\mu + A_\lambda^Tw_\star\,\hat\lambda \\
w_\star^T\hat{v} + \hat{w}^Tv_\star \\
B(\hat x,h_\star)+C(\hat x,v_\star,v_\star) + 2B(v_\star,\hat{v}) + (A-I)\hat{h}
+ \bigl(A_\lambda h_\star + B_\lambda(v_\star,v_\star)\bigr)\hat\lambda
\end{pmatrix},
\end{equation}
with the notation of~\eqref{eq:notation}.
The first three rows of~\eqref{eq:DFpd} involve only
$(\hat x,\hat v,\hat\lambda)$, and the next two do not involve $\hat h$.
Hence the matrix $D\Fpd(X_\star)$ is block lower triangular with diagonal
blocks $J_1$, $J_2$, $A-I$, where
\begin{equation}\label{eq:blocks}
J_1\bydef\begin{pmatrix} A-I & 0 & f_\lambda\\ B(v_\star,\cdot) & E & A_\lambda v_\star\\ 0 & 2v_\star^T & 0\end{pmatrix},\qquad
J_2\bydef\begin{pmatrix} E^T & -w_\star\\ v_\star^T & 0\end{pmatrix}.
\end{equation}
In particular,
\begin{equation}\label{eq:det_blocks}
\det D\Fpd(X_\star)=\det J_1\,\det J_2\,\det(A-I).
\end{equation}
The block $J_1$ is the Jacobian of the first three rows of~\eqref{eq:Fpd} with respect to $(x,v,\lambda)$, and $J_2$ that of the fourth and fifth rows with respect to $(w,\mu)$.

Let $-1,\nu_2,\dots,\nu_n$ be the eigenvalues of $A$, listed with algebraic
multiplicity, and set $\gamma\bydef\prod_{i=2}^n(1+\nu_i)$, with $\gamma=1$ if $n=1$ by convention. Since the nonreal $\nu_i$ come in conjugate pairs, $\gamma$ is real, and
$\gamma\neq0$ if and only if $-1$ is an algebraically simple eigenvalue of
$A$. To compute the determinants of $J_1$ and $J_2$, we use the adjugate $\operatorname{adj}(E)$ of
$E$, the transpose of its cofactor matrix, which satisfies $E\operatorname{adj}(E)=\operatorname{adj}(E)E=\det(E)I=0$,
together with the identity, obtained by expanding along the last row and column,
\begin{equation}\label{eq:bordered}
    \det\begin{pmatrix}M&p\\q^T&0\end{pmatrix}=-q^T\operatorname{adj}(M)\,p,
    \qquad M\in\Mat_{n\times n}(\R),\ p,q\in\R^n .
\end{equation}

\begin{lemma}\label{lem:adj}
Let $X_\star$ be a zero of $\Fpd$. Then $\operatorname{adj}(E)=\gamma\,v_\star w_\star^T$.
\end{lemma}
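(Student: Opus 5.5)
The plan is to identify $\operatorname{adj}(E)$ through its kernel relations with $E$, and then to fix the remaining scalar with a trace computation. By Lemma~\ref{lem:zero_structure}, at a zero $X_\star$ we have $Ev_\star=0$, $E^Tw_\star=0$ and $w_\star^Tv_\star=1$. In particular $v_\star\neq0$ and $w_\star\neq0$, so $E$ is singular and $\operatorname{rank}E\le n-1$. The trivial case $n=1$ is handled separately. There $E=0$, $\operatorname{adj}(E)=1$ by the usual convention, $\gamma=1$, and $v_\star w_\star^T=w_\star^Tv_\star=1$, so the identity holds. From now on assume $n\ge2$ and split according to the rank of $E$.

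\emph{Case $\operatorname{rank}E\le n-2$.} Every $(n-1)\times(n-1)$ minor of $E$ vanishes, so $\operatorname{adj}(E)=0$. Moreover $\dim\ker E\ge2$, so the geometric multiplicity of $-1$ as an eigenvalue of $A$ is at least $2$. Hence its algebraic multiplicity is at least $2$, some $\nu_i$ with $i\ge2$ equals $-1$, and $\gamma=0$. Both sides of the claimed identity are therefore zero.

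\emph{Case $\operatorname{rank}E=n-1$.} Then $\ker E=\operatorname{span}(v_\star)$ and $\ker E^T=\operatorname{span}(w_\star)$. The relation $E\operatorname{adj}(E)=0$ puts every column of $\operatorname{adj}(E)$ in $\operatorname{span}(v_\star)$. The relation $\operatorname{adj}(E)E=0$, after transposing, puts every row of $\operatorname{adj}(E)$ in $\operatorname{span}(w_\star^T)$. It follows that $\operatorname{adj}(E)=\alpha\,v_\star w_\star^T$ for some scalar $\alpha$.

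To find $\alpha$, take traces. On one side, $\operatorname{tr}(\alpha v_\star w_\star^T)=\alpha\,w_\star^Tv_\star=\alpha$. On the other side, $\operatorname{tr}\operatorname{adj}(E)$ is the sum of the principal $(n-1)\times(n-1)$ minors of $E$. This sum equals the elementary symmetric function $e_{n-1}$ of the eigenvalues of $E$, namely $0,1+\nu_2,\dots,1+\nu_n$, up to the sign $(-1)^{n-1}$ in the characteristic polynomial, and these signs cancel. Since one eigenvalue is $0$, the only surviving product in $e_{n-1}$ is $\prod_{i\ge2}(1+\nu_i)=\gamma$, so $\alpha=\gamma$. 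Note that this case covers the situation where $-1$ has a nontrivial Jordan block. There $\gamma=0$, and the argument gives $\operatorname{adj}(E)=0$ automatically, which is consistent because the normalization $w_\star^Tv_\star=1$ makes the trace detect $\alpha$ exactly.

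The main point requiring care is the trace identity $\operatorname{tr}\operatorname{adj}(M)=e_{n-1}(\sigma(M))$. It follows from the characteristic polynomial: the coefficient of $t$ in $\det(tI-M)$ is $(-1)^{n-1}$ times the sum of the principal $(n-1)$-minors, and it is also $(-1)^{n-1}e_{n-1}$ of the eigenvalues. I would cite this standard fact rather than reprove it.
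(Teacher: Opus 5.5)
Your proof is correct and follows the paper's argument essentially step for step. Like the paper, you split into $\dim\ker E\ge2$ versus $\operatorname{rank}E=n-1$, use $E\operatorname{adj}(E)=\operatorname{adj}(E)E=0$ to force $\operatorname{adj}(E)=\alpha\,v_\star w_\star^T$, and then take the trace to get $\alpha=\gamma$; your separate treatment of $n=1$ is covered in the paper by the empty-product convention. One small correction to your closing remark: when $\operatorname{rank}E=n-1$ the adjugate is a nonzero rank-one matrix, so the argument actually yields $\gamma=\alpha\neq0$, and a nontrivial Jordan block for $-1$ cannot occur at a zero of $\Fpd$ at all (if $Eu=v_\star$ then $w_\star^Tv_\star=w_\star^TEu=0$, contradicting the normalization), rather than the argument producing $\operatorname{adj}(E)=0$ in that case.
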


\begin{proof}
Since $0\neq v_\star\in\ker E$, either $\dim\ker E\ge2$ or $\ker E=\operatorname{span}(v_\star)$.
If $\dim\ker E\ge2$, then every minor of order $n-1$ of $E$ vanishes, so $\operatorname{adj}(E)=0$, and $\gamma=0$ since the algebraic multiplicity of $-1$ is at least two.
Otherwise $\ker E=\operatorname{span}(v_\star)$ and $\operatorname{rank}E=n-1$, so $\operatorname{adj}(E)\neq0$, and
$E\operatorname{adj}(E)=\operatorname{adj}(E)E=0$ forces its columns into
$\ker E=\operatorname{span}(v_\star)$ and its rows into
$\ker E^T=\operatorname{span}(w_\star)$; thus $\operatorname{adj}(E)=\alpha\,v_\star w_\star^T$.
Taking the trace, $\operatorname{tr}\operatorname{adj}(E)=\alpha\,w_\star^Tv_\star=\alpha$.
On the other hand, $\operatorname{tr}\operatorname{adj}(E)$ is the sum of the
principal minors of order $n-1$ of $E$, which in terms of the eigenvalues
$1+\nu_1,\dots,1+\nu_n$ of $E$, with $\nu_1=-1$, reads
\[
    \operatorname{tr}\operatorname{adj}(E)=\sum_{i=1}^n\prod_{j\neq i}(1+\nu_j)
    =\prod_{j=2}^n(1+\nu_j)=\gamma,
\]
since every term with $i\neq1$ contains the factor $1+\nu_1=0$. Hence $\alpha=\gamma$.
\end{proof}

We can now characterize the zeros at which $D\Fpd$ is invertible.

\begin{theorem}\label{thm:main}
Let $X_\star$ be a zero of $\Fpd$. Then $D\Fpd(X_\star)$ is invertible if and
only if $1\notin\sigma(A)$ and conditions~\ref{cond:eig} and~\ref{cond:trans}
hold.
Moreover, if $1\notin\sigma(A)$ and~\ref{cond:eig} holds, then the
crossing speed of~\ref{cond:trans} is
\begin{equation}\label{eq:nudot}
    \dnuc = w_\star^TB(\dot x,v_\star)+w_\star^TA_\lambda v_\star ,
\end{equation}
and
\begin{equation}\label{eq:det}
    \det D\Fpd(X_\star) = -2\,\dnuc\,\gamma^2\,\det(A-I)^2 .
\end{equation}
\end{theorem}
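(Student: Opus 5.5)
The plan is to use the block factorization~\eqref{eq:det_blocks} and compute $\det J_1$ and $\det J_2$ separately through the bordered-determinant identity~\eqref{eq:bordered} together with Lemma~\ref{lem:adj}. For $J_2$ this is immediate. With $M=E^T$, $p=-w_\star$ and $q=v_\star$, and using $\operatorname{adj}(E^T)=\operatorname{adj}(E)^T=\gamma\,w_\star v_\star^T$, we get $\det J_2 = v_\star^T\gamma\,w_\star v_\star^T w_\star=\gamma$, since $v_\star^Tw_\star=1$.

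For $J_1$, I would first dispose of the case $1\in\sigma(A)$. Then $A-I$ is singular and $\det(A-I)=0$, so $\det D\Fpd(X_\star)=0$ by~\eqref{eq:det_blocks}; since $\gamma$ does not involve the eigenvalue $1$ directly, the formula~\eqref{eq:det} also gives zero. So the equivalence and the formula are consistent, and no computation of $\dnuc$ is needed in this case. When $1\notin\sigma(A)$, I eliminate $\hat x$ by a Schur-complement step. Multiplying $J_1$ on the right by the unimodular block matrix that replaces the $\lambda$-column by itself plus the $x$-columns times $\dot x=-(A-I)^{-1}f_\lambda$ leaves the determinant unchanged. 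It kills the top-right entry and turns the middle entry of the last column into $q\bydef B(\dot x,v_\star)+A_\lambda v_\star$. The resulting matrix is block lower triangular, with $A-I$ in the corner and the remaining block $\begin{pmatrix}E & q\\ 2v_\star^T & 0\end{pmatrix}$. Applying~\eqref{eq:bordered} and Lemma~\ref{lem:adj} gives
\[
\det J_1=\det(A-I)\cdot\bigl(-2\gamma\,v_\star^Tv_\star\,w_\star^Tq\bigr)=-2\gamma\det(A-I)\,w_\star^Tq .
\]
Combining with $\det J_2=\gamma$ and the factor $\det(A-I)$ yields~\eqref{eq:det}, with $w_\star^Tq$ in place of $\dnuc$.

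The remaining and main step is identifying $w_\star^Tq$ with $\dnuc$ under $1\notin\sigma(A)$ and~\ref{cond:eig}, which gives~\eqref{eq:nudot}. I would differentiate $\Df(x(\lambda),\lambda)v(\lambda)=\nuc(\lambda)v(\lambda)$ at $\lambda_\star$, using the $C^2$ eigenpair branch described in Section~\ref{sec:PD}. This gives
\[
\bigl(B(\dot x,\cdot)+A_\lambda\bigr)v_\star + A\dot v=\dnuc v_\star-\dot v .
\]
Left-multiplying by $w_\star^T$ and using $w_\star^TE=0$ and $w_\star^Tv_\star=1$ produces exactly $\dnuc=w_\star^TB(\dot x,v_\star)+w_\star^TA_\lambda v_\star$. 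The normalization of $v(\lambda)$ plays no role here, since the $\dot v$ terms cancel.

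Finally, the equivalence follows from these pieces. If $1\notin\sigma(A)$ and~\ref{cond:eig},\ref{cond:trans} hold, then $\gamma\neq0$ and $\dnuc\ne0$, so the determinant is nonzero. Conversely, if $D\Fpd(X_\star)$ is invertible, then $\det(A-I)\ne0$, so $1\notin\sigma(A)$. In that case the computation above holds with $w_\star^Tq$, and nonvanishing forces $\gamma\ne0$, which gives algebraic simplicity of $-1$; the normalizations in~\ref{cond:eig} come from Lemma~\ref{lem:zero_structure}. Once simplicity holds, $w_\star^Tq=\dnuc\neq0$, which is~\ref{cond:trans}.

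The subtle point to watch is that $\dnuc$ is defined only once~\ref{cond:eig} holds. For that reason I state the general formula with $w_\star^Tq$ and specialize to $\dnuc$ only after simplicity is established.
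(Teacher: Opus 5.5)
Your proof is correct and follows the paper's argument essentially step for step: $\det J_2=\gamma$ from~\eqref{eq:bordered} and Lemma~\ref{lem:adj}, the column operation with $\dot x$ that reduces $J_1$ to a bordered block, and differentiation of the eigenpair to identify $w_\star^T\bigl(B(\dot x,v_\star)+A_\lambda v_\star\bigr)$ with $\dnuc$. The differences are cosmetic. You carry $w_\star^Tq$ before simplicity is known and read $\gamma\neq0$ off the full determinant, whereas the paper reads it directly off $\det J_2=\gamma$. Also, your aside that~\eqref{eq:det} ``also gives zero'' when $1\in\sigma(A)$ is unnecessary, and not really meaningful, since $\dot x$ and $\dnuc$ are undefined in that case.
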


\begin{proof}
\smallskip\noindent\emph{The block $J_2$.}
By~\eqref{eq:blocks}, \eqref{eq:bordered} and Lemma~\ref{lem:adj},
\[
    \det J_2=-v_\star^T\operatorname{adj}(E^T)(-w_\star)=w_\star^T\operatorname{adj}(E)\,v_\star
    =\gamma\,(w_\star^Tv_\star)^2=\gamma .
\]

\smallskip\noindent\emph{Derivative of $\nuc$.}
Assume that $1\notin\sigma(A)$ and that~\ref{cond:eig} holds. As in
Section~\ref{sec:PD}, the fixed point and the eigenvalue $-1$ continue as
$x(\lambda)$ and as a $C^2$ eigenpair $(\nuc(\lambda),v(\lambda))$ of
$\Df(x(\lambda),\lambda)$ with $v(\lambda_\star)=v_\star$. Differentiating
$\Df(x(\lambda),\lambda)v(\lambda)=\nuc(\lambda)v(\lambda)$ at $\lambda_\star$ and
writing $\dot v\bydef\frac{dv}{d\lambda}(\lambda_\star)$ gives
$B(\dot x,v_\star)+A_\lambda v_\star+E\dot v=\dnuc\,v_\star$, and applying
$w_\star^T$ yields~\eqref{eq:nudot}.

\smallskip\noindent\emph{The block $J_1$.}
Under the same assumptions, adding to the last column of $J_1$
in~\eqref{eq:blocks} the first $n$ columns multiplied by the components of
$\dot x$ does not change the determinant. The entry $f_\lambda$ becomes
$f_\lambda+(A-I)\dot x=0$ by~\eqref{eq:xdot}, and the entry $A_\lambda v_\star$ becomes
$d\bydef B(\dot x,v_\star)+A_\lambda v_\star$. The resulting matrix is block
lower triangular with diagonal blocks $A-I$ and
$\bigl(\begin{smallmatrix}E&d\\2v_\star^T&0\end{smallmatrix}\bigr)$, so
by~\eqref{eq:bordered} and Lemma~\ref{lem:adj},
\[
    \det J_1=-2\det(A-I)v_\star^T\operatorname{adj}(E)d
    =-2\gamma(v_\star^Tv_\star)(w_\star^Td)\det(A-I)=-2\gamma\dnuc\det(A-I),
\]
since $v_\star^Tv_\star=1$ and $w_\star^Td=\dnuc$ by~\eqref{eq:nudot}.
Inserting both determinants in~\eqref{eq:det_blocks} proves~\eqref{eq:det}.

\smallskip\noindent\emph{Invertibility.}
By~\eqref{eq:det_blocks}, $D\Fpd(X_\star)$ is invertible only if
$\det(A-I)\neq0$ and $\det J_2=\gamma\neq0$, that is, only if
$1\notin\sigma(A)$ and~\ref{cond:eig} holds, the normalizations
of~\ref{cond:eig} being satisfied at every zero. Under these two conditions,
\eqref{eq:det} shows that $D\Fpd(X_\star)$ is invertible if and only if
$\dnuc\neq0$, which is~\ref{cond:trans}.
\end{proof}

\section{The CAP procedure}\label{sec:procedure}

We now explain how the preceding results lead to a CAP. Starting from a
numerical approximation of a zero of $\Fpd$, we first obtain an enclosure
of a zero at which the Jacobian is invertible. We then use this enclosure to verify the remaining spectral
condition~\ref{cond:spec} and enclose the normal-form coefficient $c$ of~\ref{cond:nd}.

For the first step, we use the following version of the Newton--Kantorovich Theorem;
see~\cite[Theorem~3.3]{LessardPugliese2025}. In its statement, $Q$ plays the role of an approximate inverse of $DF(\bar X)$, in practice a floating-point inverse. Throughout this
section, norms are $\infty$-norms, and $\ball{r}{\bar X}$ denotes the closed
ball of radius $r$ centered at $\bar X$.

\begin{theorem}[Newton--Kantorovich]\label{thm:NK}
Let $F:U\to\R^N$ be $C^1$ on an open set $U\subset\R^N$, let $\bar X\in U$,
let $Q\in\Mat_{N\times N}(\R)$ and $r_\star>0$ with $\ball{r_\star}{\bar X}\subset U$.
If $Y,Z\ge0$ satisfy
$\norm{Q\,F(\bar X)}\le Y$, $\sup_{X\in\ball{r_\star}{\bar X}}\norm{I-Q\,DF(X)}\le Z<1$
and $r_0\bydef  Y/(1-Z)<r_\star$, then $F$ has a unique zero $X_\star$ in $\ball{r_0}{\bar X}$, and $DF(X_\star)$ is
invertible.
\end{theorem}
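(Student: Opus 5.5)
The plan is the standard contraction argument for the quasi-Newton operator $T(X)\bydef X-Q\,F(X)$ on the closed ball $\ball{r_0}{\bar X}$. The conclusion has three parts: existence, uniqueness, and invertibility of $DF(X_\star)$. I would obtain all three from two facts about $T$: it is a contraction, and it maps the ball into itself. Then I would apply the Banach fixed point theorem.

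\emph{Step 1: $Q$ is invertible, so fixed points of $T$ are zeros of $F$.} Since $\norm{I-Q\,DF(\bar X)}\le Z<1$, the Neumann series shows that $Q\,DF(\bar X)$ is invertible. Hence $Q$ is invertible, and $T(X)=X$ if and only if $F(X)=0$.

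\emph{Step 2: $T$ is a contraction on $\ball{r_\star}{\bar X}$.} For $X,X'$ in this ball, which is convex and contained in $U$, the mean value inequality applied to $DT=I-Q\,DF$ gives
\[
\norm{T(X)-T(X')}\le \sup_{\xi\in\ball{r_\star}{\bar X}}\norm{I-Q\,DF(\xi)}\,\norm{X-X'}\le Z\norm{X-X'}.
\]
Here I use the integral form $T(X)-T(X')=\int_0^1 DT(X'+t(X-X'))\,(X-X')\,dt$, which is valid because $F$ is $C^1$.

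\emph{Step 3: self-map of the smaller ball.} Take $X\in\ball{r_0}{\bar X}$. Note that $r_0<r_\star$. Then
\[
\norm{T(X)-\bar X}\le\norm{T(X)-T(\bar X)}+\norm{T(\bar X)-\bar X}\le Z r_0+Y .
\]
Since $r_0=Y/(1-Z)$, we have $Zr_0+Y=r_0$, so $T$ maps the ball into itself. The ball $\ball{r_0}{\bar X}$ is complete. Banach's theorem therefore gives a unique fixed point $X_\star$ of $T$ in it, and by Step 1 this is the unique zero of $F$ in $\ball{r_0}{\bar X}$.

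\emph{Step 4: invertibility.} Because $X_\star\in\ball{r_\star}{\bar X}$, we have $\norm{I-Q\,DF(X_\star)}\le Z<1$. The Neumann series then shows that $Q\,DF(X_\star)$ is invertible, and so $DF(X_\star)$ is invertible.

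The only point needing care is Step 2. The contraction constant must come from the supremum over the larger ball $\ball{r_\star}{\bar X}$, and the segment joining $X$ and $X'$ must stay in $U$; convexity of the ball settles the latter. Also, $Y=0$ is a degenerate case: then $r_0=0$, and the conclusion is simply $F(\bar X)=0$, since $Q$ is invertible. The argument above still covers it.
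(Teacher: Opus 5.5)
Your proof is correct and complete: it is the standard contraction argument for the quasi-Newton operator $T(X)=X-Q\,F(X)$, and you use the Neumann series on the bound $\norm{I-Q\,DF(X)}\le Z<1$ to get invertibility of both $Q$ and $DF(X_\star)$. The paper does not prove this statement itself but cites Theorem~3.3 of Lessard--Pugliese, which it describes as exactly such a contraction argument, so your route is essentially the same.
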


\begin{remark}
The rigorous enclosures of the bounds are obtained using interval arithmetic.
In particular, the supremum over $\ball{r_\star}{\bar X}$ can be computed readily since this closed ball is the interval vector $[\bar X-r_\star,\bar X+r_\star]$.
\end{remark}

Once Theorem~\ref{thm:NK} applies, the $(x,\lambda)$-components of the ball $\ball{r_0}{\bar X}$ enclose $(x_\star,\lambda_\star)$, and we use them to verify condition~\ref{cond:spec} for $A=\Df(x_\star,\lambda_\star)$ with Gershgorin disks.
To keep the disks small, let $V$ be a matrix of approximate eigenvectors of $\Df(\bar x,\bar\lambda)$, let $\mathcal W$ be an interval matrix enclosing $V^{-1}$, and let $\mathcal M \bydef \mathcal W\,\Df([\bar x-r_0, \bar x+r_0],[\bar\lambda-r_0,\bar\lambda+r_0])\,V$ be the interval matrix obtained by interval arithmetic calculations.
Then $V^{-1}AV\in\mathcal M$, and this matrix has the same eigenvalues as $A$, with the same algebraic multiplicities.
For each $k=1, \dots, n$, let
\[
    \mathcal D_k\bydef\{z\in\C:\ |z - m_k| \le R_k\}, \qquad R_k\ge \operatorname{rad}\mathcal M_{kk}+\sum_{j\neq k}|\mathcal M_{kj}|,
\]
where $m_k$ and $\operatorname{rad}\mathcal M_{kk}$ are the midpoint and radius of $\mathcal M_{kk}$, respectively.
Each $\mathcal D_k$ contains the $k$-th Gershgorin disk of $V^{-1}AV$, and the Gershgorin Disks Theorem~\cite[Sec.~6.1]{HornJohnson2013} yields the following.

\begin{theorem}[Gershgorin]\label{thm:gershgorin}
Every eigenvalue of $A$ lies in $\bigcup_k\mathcal D_k$. If the union of
$\ell$ of the disks is disjoint from the union of the others, it contains
exactly $\ell$ eigenvalues of $A$, counted with algebraic multiplicity.
\end{theorem}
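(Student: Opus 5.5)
The statement is a direct consequence of the classical Gershgorin Disks Theorem applied to $V^{-1}AV$, together with the enclosure $V^{-1}AV\in\mathcal M$. The proof therefore has two parts: placing the true Gershgorin disks inside the computed disks $\mathcal D_k$, and transferring the counting statement.

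\emph{Containment of the disks.} Since Theorem~\ref{thm:NK} gives $(x_\star,\lambda_\star)$ in the $(x,\lambda)$-components of $\ball{r_0}{\bar X}$, and interval arithmetic produces enclosures, we have $A\in\Df([\bar x-r_0,\bar x+r_0],[\bar\lambda-r_0,\bar\lambda+r_0])$. As $V^{-1}\in\mathcal W$, it follows that $M\bydef V^{-1}AV\in\mathcal M$. Similarity preserves eigenvalues together with their algebraic multiplicities, since it preserves the characteristic polynomial.

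For each $k$ we check that the Gershgorin disk $\{z:|z-M_{kk}|\le\sum_{j\neq k}|M_{kj}|\}$ lies in $\mathcal D_k$. If $|z-M_{kk}|\le\sum_{j\neq k}|M_{kj}|$, then
\[
|z-m_k|\le|M_{kk}-m_k|+\sum_{j\ne k}|M_{kj}|\le\operatorname{rad}\mathcal M_{kk}+\sum_{j\ne k}|\mathcal M_{kj}|\le R_k,
\]
where $|\mathcal M_{kj}|$ denotes the magnitude of the interval, which bounds $|M_{kj}|$. The classical theorem~\cite[Sec.~6.1]{HornJohnson2013} then places every eigenvalue of $M$, and hence of $A$, in $\bigcup_k\mathcal D_k$.

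\emph{Counting statement.} The classical counting result is stated for the true disks, not for enlarged ones, so we redo the continuity argument with the $\mathcal D_k$ directly. Set $M(t)=\operatorname{diag}(M)+t(M-\operatorname{diag}(M))$ for $t\in[0,1]$. By the containment just shown, every Gershgorin disk of $M(t)$ lies in the corresponding $\mathcal D_k$: its center is still $M_{kk}$ and its radius is $t\sum_{j\ne k}|M_{kj}|$. Let $S$ be the union of the chosen $\ell$ disks and $T$ the union of the others, with $S\cap T=\emptyset$.

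At $t=0$ the eigenvalues of $M(0)$ are the centers $M_{kk}$, and exactly $\ell$ of them, with multiplicity, lie in $S$. For every $t$, all eigenvalues of $M(t)$ lie in $S\cup T$. The eigenvalues depend continuously on $t$, as roots of a polynomial whose coefficients are continuous in $t$. The sets $S$ and $T$ are compact and disjoint, hence at positive distance from each other. So the number of eigenvalues in $S$ cannot change; formally, this follows from the argument principle applied on a contour separating $S$ from $T$. Hence $M(1)=M$, and therefore $A$, has exactly $\ell$ eigenvalues in $S$.

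The only delicate point is this counting step: one must use the enlarged disks throughout the homotopy, rather than quote the classical statement for the true disks. This works because the containment shown above holds uniformly in $t$.
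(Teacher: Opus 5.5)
Your proof is correct and follows the paper, which simply observes that each $\mathcal D_k$ contains the $k$-th Gershgorin disk of $V^{-1}AV\in\mathcal M$ and then quotes the classical theorem. The homotopy you redo is valid but not needed: since the true disks lie inside the $\mathcal D_k$, disjointness of $S$ and $T$ already forces the corresponding unions of true disks to be disjoint, so the classical count puts exactly $\ell$ eigenvalues in a subset of $S$ and the remaining $n-\ell$ in a subset of $T$, which misses $S$.
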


The following theorem combines these computations, whose hypotheses are checked with interval arithmetic, to verify  conditions~\ref{cond:fp}--\ref{cond:nd} of Section~\ref{sec:PD}, and thus establishes the existence of a nondegenerate period-doubling bifurcation.

\begin{theorem}[CAP procedure]\label{thm:PD}
Let $f \in C^3$ and suppose that the following hold.
\begin{enumerate}[label=\normalfont(\textbf{\arabic*}), ref=(\arabic*), leftmargin=*]
    \item\label{step:NK} Theorem~\ref{thm:NK} applies to $F=\Fpd$ at some
    $\bar X$, giving a zero $X_\star=(x_\star,v_\star,\lambda_\star,w_\star,-1,h_\star)$ in
    $\ball{r_0}{\bar X}$ with $D\Fpd(X_\star)$ invertible.
    \item\label{step:spec} Some disk $\mathcal D_{k_0}$ contains $-1$ and is
    disjoint from the union of the other disks, and each other disk lies in
    $\{|z|<1\}$ or in $\{|z|>1\}$.
    \item\label{step:NF} An interval enclosure of~\eqref{eq:c_formula} over
    $\ball{r_0}{\bar X}$ does not contain $0$.
\end{enumerate}
Then $f$ undergoes a nondegenerate period-doubling bifurcation at $(x_\star,\lambda_\star)$,
supercritical if the enclosure of $c$ is negative and subcritical if it is
positive.
\end{theorem}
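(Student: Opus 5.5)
The plan is to check conditions~\ref{cond:fp}--\ref{cond:nd} one at a time, in the order in which the hypotheses supply them. The only subtlety is that Theorem~\ref{thm:main} alone does not rule out eigenvalues on the unit circle other than $\pm1$, so step~\ref{step:spec} must supply that.

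First, from step~\ref{step:NK}, $X_\star$ is a zero of $\Fpd$. Lemma~\ref{lem:zero_structure} then gives $\mu_\star=-1$ and \ref{cond:fp}. It also gives $Av_\star=-v_\star$, $A^Tw_\star=-w_\star$ and the normalizations $v_\star^Tv_\star=w_\star^Tv_\star=1$. Since $D\Fpd(X_\star)$ is invertible, Theorem~\ref{thm:main} yields three facts: $1\notin\sigma(A)$, $-1$ is algebraically simple (so \ref{cond:eig} holds with these $v_\star,w_\star$), and \ref{cond:trans} holds.

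Second, for \ref{cond:spec} I use Theorem~\ref{thm:gershgorin}. The key point is that $(x_\star,\lambda_\star)$ lies in the $(x,\lambda)$-projection of $\ball{r_0}{\bar X}$. Hence $A=\Df(x_\star,\lambda_\star)$ lies in the interval enclosure $\Df([\bar x-r_0,\bar x+r_0],[\bar\lambda-r_0,\bar\lambda+r_0])$, and $V^{-1}AV\in\mathcal M$. Each $\mathcal D_k$ therefore contains the $k$-th Gershgorin disk of $V^{-1}AV$. Since $\mathcal D_{k_0}$ is disjoint from the union of the others, it contains exactly one eigenvalue of $A$. That eigenvalue must be $-1$. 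The argument for this is that $-1\in\sigma(A)$, and $-1$ lies in $\mathcal D_{k_0}$, hence in no other disk. Every other eigenvalue lies in some $\mathcal D_k$ with $k\neq k_0$, and each such disk avoids the unit circle $\{|z|=1\}$. So no eigenvalue other than $-1$ lies on the unit circle. Here the simple eigenvalue $-1$ is counted once, consistent with the single eigenvalue in $\mathcal D_{k_0}$. This gives \ref{cond:spec}.

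Third, with \ref{cond:fp}--\ref{cond:spec} established, Lemma~\ref{lem:zero_structure} shows that $h_\star$ is the vector in~\eqref{eq:h}. Then Proposition~\ref{prop:c} shows that~\eqref{eq:c_formula}, evaluated at $X_\star$ with $C=D_x^3f(x_\star,\lambda_\star)$, is the coefficient $c$. Because $X_\star\in\ball{r_0}{\bar X}$, the interval evaluation of~\eqref{eq:c_formula} over this ball contains $c$, by the inclusion property of interval arithmetic. Step~\ref{step:NF} then gives $c\neq0$, that is, \ref{cond:nd}. The sign of the enclosure fixes the sign of $c$, and by the definition following Proposition~\ref{prop:side} the bifurcation is supercritical or subcritical accordingly.

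The main (mild) obstacle is the Gershgorin step, namely making sure the isolated disk really corresponds to $-1$ and that the multiplicity count is consistent. Both follow from the disjointness hypothesis together with $-1\in\sigma(A)$. A secondary point is that the enclosure of $c$ must be taken with all quantities evaluated at the same point $X_\star$ of the ball; this holds by inclusion monotonicity.
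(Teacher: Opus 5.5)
Your proposal is correct and follows the paper's proof essentially step for step. Lemma~\ref{lem:zero_structure} gives \ref{cond:fp}, Theorem~\ref{thm:main} gives $1\notin\sigma(A)$, \ref{cond:eig} and \ref{cond:trans}, and Theorem~\ref{thm:gershgorin} with step~\ref{step:spec} gives \ref{cond:spec}; Lemma~\ref{lem:zero_structure}, Proposition~\ref{prop:c} and the inclusion property of interval arithmetic give \ref{cond:nd} and the sign of $c$, and your more detailed Gershgorin count only spells out what the paper states in one line.
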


\begin{proof}
By Lemma~\ref{lem:zero_structure}, \ref{cond:fp} holds and $-1$ is an
eigenvalue of $A$. Since $D\Fpd(X_\star)$ is invertible by~\ref{step:NK},
Theorem~\ref{thm:main} gives $1\notin\sigma(A)$, \ref{cond:eig}
and~\ref{cond:trans}.
By~\ref{step:spec} and Theorem~\ref{thm:gershgorin}, $\mathcal D_{k_0}$ contains
exactly one eigenvalue, namely
$-1$, and all other eigenvalues are off the unit circle; this
is~\ref{cond:spec}. By
Lemma~\ref{lem:zero_structure} and Proposition~\ref{prop:c}, \ref{step:NF}
gives~\ref{cond:nd} and the sign of $c$.
\end{proof}

\section{A Galerkin projection of the Kot--Schaffer model}\label{sec:ks}

Kot and Schaffer~\cite{KotSchaffer1986} introduced integro-difference equations
to describe populations whose growth and dispersal occur in separate phases;
a rigorous numerical study of such models is given in~\cite{DayJungeMischaikow2004}.
We consider even densities $a$ on $\mathbb{T}=\R/2\pi\mathbb{Z}$ and the map
\begin{equation}\label{eq:ks_model}
    a\mapsto\Bigl(y\mapsto\frac{1}{2\pi}\int_{-\pi}^{\pi}\beta(y-x)\,\lambda\,a(x)\bigl(1-\kappa(x)\,a(x)\bigr)\,dx\Bigr),
\end{equation}
where the growth rate $\lambda>0$ is the bifurcation parameter, $\beta$ is the
dispersal kernel, and $\kappa$ is the inverse carrying capacity.
We choose
\begin{equation*}
\kappa(x)=0.8-0.4\cos x, \qquad \beta(x) = \frac{3}{5-4 \cos x}.
\end{equation*}
Writing $a(x)=a_0+2\sum_{k\ge1}a_k\cos(kx)$, and similarly for the cosine coefficients of $\kappa$ and $\beta$, we get $\kappa_0=0.8$, $\kappa_1=-0.2$, $\kappa_k=0$ for $k\ge2$, and $\beta_k=2^{-k}$ for all $k \ge 0$.

Retaining the first cosine coefficients $k=0,\dots,n-1$, we obtain $f:\R^n\times\R\to\R^n$ given component-wise by
\begin{equation}\label{eq:ks_proj}
    f_k(a,\lambda)=\lambda\,\beta_k\Bigl(a_k-\!\!\sum_{\substack{j+p+q=k\\|j|\le1,\ |p|,|q|\le n-1}}\!\!
    \kappa_{|j|}\,a_{|p|}\,a_{|q|}\Bigr),\qquad k=0,\dots,n-1.
\end{equation}
Since $f$ is quadratic in $a$, $C\equiv0$ and thus $c=-w_\star^TB(v_\star,h_\star)$, where $B=\lambda_\star B_1$, with
\[
    [B_1(u,z)]_k\bydef-\beta_k\!\!\sum_{\substack{j+p+q=k\\|j|\le1,\ |p|,|q|\le n-1}}\!\!
    \kappa_{|j|}\bigl(u_{|p|}z_{|q|}+z_{|p|}u_{|q|}\bigr).
\]

We take $n=10$, so $\Fpd:\R^{42}\to\R^{42}$.
The proof script~\cite{Code} computes the nontrivial fixed point at $\lambda \approx 2.7$, where $\Df$ has an eigenvalue close to $-1$, takes $\bar v$ and $\bar w$ from the corresponding right and left eigenvectors and $\bar h$ from~\eqref{eq:h}, and applies Newton's method to
$\Fpd$.
Numerically, the nontrivial fixed point, which bifurcates from $a=0$ at $\lambda=1$, is stable for $1<\lambda<\lambda_\star$ and
loses stability at $\lambda_\star\approx2.7042$, where a stable $2$-cycle appears
(Figure~\ref{fig:ks_bif}).

\begin{theorem}\label{thm:ks_pd}
The map~\eqref{eq:ks_proj} with $n=10$ undergoes a supercritical
period-doubling bifurcation at a point $(a_\star,\lambda_\star)$ such that
\begin{gather*}
    \lambda_\star\in[2.70423246926594,\,2.70423246926599],\\
    c\in[-18.6289496098,\,-18.6289496097],\quad
    \dnuc\in[-0.9888090931,\,-0.9888090930].
\end{gather*}
The bifurcating $2$-cycle exists for $\lambda>\lambda_\star$ sufficiently close
to $\lambda_\star$ and is attracting. The corresponding zero of $\Fpd$ lies
within $\infty$-distance $2.1\cdot10^{-14}$ of the numerical approximation $\bar X$ computed in~\cite{Code}.
\end{theorem}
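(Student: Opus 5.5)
The proof will apply Theorem~\ref{thm:PD} to $f$ in~\eqref{eq:ks_proj} with $n=10$, checking each hypothesis with interval arithmetic. The map $f$ is polynomial, hence $C^3$, with $C\equiv0$ and $B=\lambda B_1$. This means $D\Fpd$ has the explicit form~\eqref{eq:DFpd} with $A_\lambda=\Df/\lambda$ and $B_\lambda=B_1$. It can therefore be evaluated exactly on interval vectors.

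\textbf{Step 1: existence of the zero.} Starting from the Newton-refined approximation $\bar X\in\R^{42}$, I would take $Q$ to be a floating-point inverse of $D\Fpd(\bar X)$. I would then bound $Y\ge\norm{Q\Fpd(\bar X)}$ by evaluating $\Fpd(\bar X)$ in interval arithmetic. For $Z$, I would evaluate $\norm{I-Q\,D\Fpd([\bar X-r_\star,\bar X+r_\star])}$ on the interval box for a small $r_\star$, for instance $10^{-10}$.

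Since $\Fpd$ is at most cubic in $X$ (the $h$-row contains $B(v,v)$ times $\lambda$), the dependence of $Z$ on $r_\star$ is mild. I expect $Y$ of order $10^{-15}$ and $Z\ll1$, so $r_0=Y/(1-Z)\le2.1\cdot10^{-14}<r_\star$. Theorem~\ref{thm:NK} then gives $X_\star$ with $D\Fpd(X_\star)$ invertible. This is hypothesis~\ref{step:NK}, and it already yields the stated distance bound. The enclosure of $\lambda_\star$ is read off as $\bar\lambda\pm r_0$.

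\textbf{Step 2: the spectral condition.} I would compute $V$ as numerical eigenvectors of $\Df(\bar a,\bar\lambda)$, enclose $V^{-1}$ rigorously, and form $\mathcal M$ over the $r_0$-box. The eigenvalues are presumably well separated: one near $-1$, the others having moduli clearly different from $1$, mostly small since $\beta_k=2^{-k}$ damps the high modes. Because the box is tiny, the radii $R_k$ will be tiny, and hypothesis~\ref{step:spec} follows from Theorem~\ref{thm:gershgorin}. If $V$ is complex because of conjugate pairs, I would use complex interval arithmetic, which changes nothing.

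In addition, I would check that all disks other than $\mathcal D_{k_0}$ lie in $\{|z|<1\}$. This is not needed for the bifurcation itself, but it is needed for the claim that the $2$-cycle is attracting in $\R^n$, by the remark following Proposition~\ref{prop:side}.

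\textbf{Step 3: sign data.} I would enclose $c=-w_\star^TB(v_\star,h_\star)$ over the box and verify that it lies in the stated negative interval, which gives hypothesis~\ref{step:NF} and supercriticality. For $\dnuc$, I would use~\eqref{eq:nudot}. This requires enclosing $\dot x=-(A-I)^{-1}f_\lambda$, which I would do by solving the interval linear system, for example via a Krawczyk-type step with the approximate inverse of $A-I$ and a Neumann-series bound.

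The result should be $\dnuc<0$. By Proposition~\ref{prop:side}, the $2$-cycle then exists where $\dnuc(\lambda-\lambda_\star)/c>0$. Since $\dnuc<0$ and $c<0$, this is the condition $\lambda>\lambda_\star$, and because $c<0$ the cycle is attracting.

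The main obstacle I expect is obtaining sharp enough enclosures to reach the ten-digit intervals for $c$ and $\dnuc$, and the tight bound $r_0\le2.1\cdot10^{-14}$. Interval overestimation in $\mathcal M$ and in the $(A-I)^{-1}$ solve must stay near machine precision. I would address this by evaluating everything in midpoint-radius form around the high-accuracy $\bar X$, and if necessary by using extended precision for $\bar X$ and $Q$.
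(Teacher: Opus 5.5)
Your proposal is correct and follows the paper's proof essentially step for step. Both apply the three hypotheses of Theorem~\ref{thm:PD}: Newton--Kantorovich with a floating-point inverse, Gershgorin disks (including the check that the other nine disks lie inside the unit disk, which gives attractivity in $\R^{10}$), and interval enclosures of $c$ and of $\dnuc$ via \eqref{eq:nudot} with $\dot x$ from \eqref{eq:xdot}, followed by Proposition~\ref{prop:side}. The only differences are numerical: the paper takes $r_\star=10^{-8}$ and obtains $Y\le2.09\cdot10^{-14}$ and $Z\le1.53\cdot10^{-5}$, with Gershgorin radii at most $7.0\cdot10^{-12}$ and the other nine disks contained in $\{|z|\le0.337\}$.
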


\begin{proof}
We apply the three steps of Theorem~\ref{thm:PD}. For step~\ref{step:NK},
with $r_\star=10^{-8}$, interval arithmetic gives $Y\le2.09\cdot10^{-14}$ and $Z\le1.53\cdot10^{-5}$, so Theorem~\ref{thm:NK} applies with $r_0\le2.1\cdot10^{-14}$.
For step~\ref{step:spec}, the ten Gershgorin disks over $\ball{r_0}{\bar X}$ have
radii at most $7.0\cdot10^{-12}$; the disk containing $-1$ is separated from
the others by more than $0.66$, and the other nine lie in $\{|z|\le0.337\}$.
For step~\ref{step:NF}, evaluating $-\lambda\,w^TB_1(v,h)$ over
$\ball{r_0}{\bar X}$ gives $c\in[-18.628949609792,\allowbreak\,-18.628949609788]$.
The bifurcation is therefore supercritical, and the $2$-cycle is attracting in
$\R^{10}$ since all other eigenvalues lie inside the unit disk.
Evaluating~\eqref{eq:nudot} over $\ball{r_0}{\bar X}$, with $\dot x$ enclosed
through~\eqref{eq:xdot}, gives the stated enclosure of $\dnuc$, and
Proposition~\ref{prop:side} places the $2$-cycle at $\lambda>\lambda_\star$
since $c<0$ and $\dnuc<0$. The enclosures of $\lambda_\star$ and
$a_{\star,0}$ are the corresponding components of $\ball{r_0}{\bar X}$.
\end{proof}

\begin{figure}[ht]
  \centering
  \includegraphics[width=0.86\textwidth]{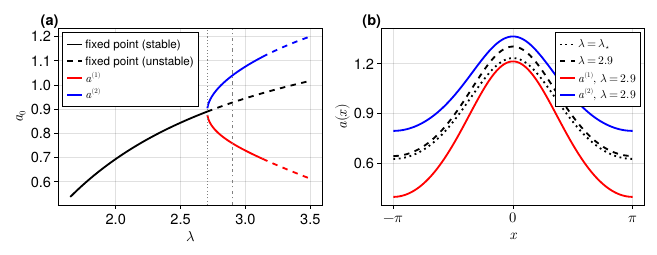}
  \caption{Numerical bifurcation diagram and profiles for the $10$-mode
Kot--Schaffer projection; only the point $(a_\star,\lambda_\star)$ is covered by the CAP. (a)~Mean density $a_0$ of the
    nontrivial fixed point  and of the two
    points $a^{(1)},a^{(2)}$ of the bifurcating $2$-cycle (solid: stable, dashed: unstable), as functions of
    $\lambda$; the vertical lines mark $\lambda_\star$ (dotted) and $\lambda=2.9$ (dash-dotted).
    (b)~Profiles $a(x)$ of the fixed point at $\lambda_\star$ (dotted) and at $\lambda=2.9$
    (dashed, unstable), and of the $2$-cycle at $\lambda=2.9$.}
  \label{fig:ks_bif}
\end{figure}

\begin{remark}\label{rem:ks_infty}
The result of Theorem~\ref{thm:ks_pd} applies to the finite-dimensional
projection~\eqref{eq:ks_proj}. Extending it to the integro-difference
operator~\eqref{eq:ks_model} requires control of the omitted Fourier modes. The Fourier coefficients of
$a_\star$, $v_\star$, $w_\star$ and $h_\star$ decay quickly ($|a_{\star,9}|\approx10^{-10}$; the slowest
decay is that of $w_\star$, with $|w_{\star,9}|\approx3\cdot10^{-8}$), which
suggests that an extension to the full model could be obtained using tail
estimates in weighted sequence spaces, as
in~\cite{VandenBergLessard2015,DayJungeMischaikow2004}.
\end{remark}


\appendix
\section{Center-manifold computations}\label{app:cm}

\subsection{Proof of Proposition~\ref{prop:c}}\label{app:c}

Throughout, \ref{cond:fp}--\ref{cond:spec} are assumed.
At $\lambda=\lambda_\star$, let $\xi\mapsto P(\xi)$ be a $C^3$ parametrization of a center manifold~\cite[Ch.~5]{Kuznetsov2004} with $P(\xi)=x_\star+v_\star \xi +O(\xi^2)$, and let $g$ be the
restricted map, defined by $f(P(\xi),\lambda_\star)=P(g(\xi))$.
Then $g$ is $C^3$ with $g(0)=0$ and we write
\[
P(\xi)=x_\star+v_\star\xi+P_2\xi^2+P_3\xi^3+o(\xi^3),\qquad
g(\xi)=a_1 \xi+a_2\xi^2+a_3\xi^3+o(\xi^3).
\]
We now expand the invariance equation $f(P(\xi),\lambda_\star)=P(g(\xi))$ to third order:
\begin{align*}
f(P(\xi),\lambda_\star) &= x_\star + A v_\star \xi + \bigl(AP_2+\tfrac12B(v_\star,v_\star)\bigr)\xi^2 +\bigl(AP_3+B(v_\star,P_2)+\tfrac16C(v_\star,v_\star,v_\star)\bigr)\xi^3+o(\xi^3), \\
P(g(\xi)) &= x_\star + a_1 v_\star \xi + \bigl(a_1^2 P_2 + a_2v_\star\bigr) \xi^2+ \bigl( a_1^3 P_3 + 2 a_1 a_2 P_2 + a_3v_\star\bigr)  \xi^3+o(\xi^3).
\end{align*}
Matching powers of $\xi$ gives
\begin{enumerate}
\item $a_1 = -1$ at order $\xi$, using $Av_\star=-v_\star$;
\item $(A-I)P_2+\tfrac12B(v_\star,v_\star)=a_2v_\star$ at order $\xi^2$, using $a_1=-1$, whence $P_2=\tfrac12 h_\star -\tfrac12a_2v_\star$ since $1\notin\sigma(A)$ and $(A-I)v_\star=-2v_\star$;
\item $(A+I)P_3+B(v_\star,P_2)+2a_2P_2+\tfrac16C(v_\star,v_\star,v_\star)=a_3v_\star$ at order $\xi^3$.
\end{enumerate}
Substituting $P_2$ gives
\[
B(v_\star,P_2)= \tfrac12 B(v_\star, h_\star) -\tfrac12 a_2 B(v_\star,v_\star).
\]
Then, using $w_\star^Tv_\star=1$ as well as $w_\star^T(A-I)=-2w_\star^T$,
\begin{align*}
w_\star^T B(v_\star,P_2) + 2a_2 w_\star^T P_2
&= \tfrac12 w_\star^T B(v_\star, h_\star) -\tfrac12 a_2 w_\star^TB(v_\star,v_\star) + a_2 w_\star^T h_\star - a_2^2 w_\star^T v_\star \\
&= \tfrac12 w_\star^T B(v_\star, h_\star) - a_2^2 .
\end{align*}
Applying $w_\star^T$ to the equation at order $\xi^3$ annihilates $(A+I)P_3$ and we obtain
\[
a_3 = \tfrac12w_\star^TB(v_\star,h_\star) - a_2^2
      +\tfrac16w_\star^TC(v_\star,v_\star,v_\star),
\]
that is,
\[
a_3+a_2^2=\tfrac12w_\star^TB(v_\star,h_\star)+\tfrac16w_\star^TC(v_\star,v_\star,v_\star).
\]
Finally, since $a_1=-1$, a direct computation gives $g(g(\xi))=\xi-2(a_3+a_2^2)\xi^3+o(\xi^3)$, so $c=-2(a_3+a_2^2)$ is given by~\eqref{eq:c_formula}.
Since it only involves $A$, $B$, $C$, $v_\star$ and $w_\star$, it depends neither on the center manifold nor on $P$.
\qed

\subsection{Proof of Proposition~\ref{prop:side}}\label{app:side}

We first show that period-$2$ orbits of $g(\cdot,\lambda)$ near $0$ correspond to nonzero fixed points of $g^2(\cdot,\lambda)\bydef g(g(\cdot,\lambda),\lambda)$.
Since $g(0,\lambda)=0$, we have
\[
g(\xi,\lambda)=\xi\int_0^1\partial_\xi g(s\xi,\lambda)\,\mathrm{d}s,
\]
and the integrand is close to $\partial_\xi g(0,\lambda_\star)=-1$ for $(\xi,\lambda)$ close to $(0,\lambda_\star)$.
Hence, for $\xi\neq0$, $g(\xi,\lambda)$ is close to $0$ and has the sign opposite to $\xi$; in particular $g(\xi,\lambda)\neq\xi$.
Therefore, if $\xi\neq0$ is a fixed point of $g^2(\cdot,\lambda)$, then $\{\xi,g(\xi,\lambda)\}$ is a period-$2$ orbit of $g(\cdot,\lambda)$, and conversely every period-$2$ orbit near $0$ consists of nonzero fixed points of $g^2(\cdot,\lambda)$.

To find these fixed points, we remove the trivial solution $\xi=0$. Since $g^2(0,\lambda)=0$, we have $g^2(\xi,\lambda)-\xi=\xi\,G(\xi,\lambda)$ with
\[
G(\xi,\lambda)\bydef\int_0^1\bigl(\partial_\xi g^2(s\xi,\lambda)-1\bigr)\,\mathrm{d}s,
\]
which is $C^2$ since $\partial_\xi g^2$ is $C^2$, and the nonzero fixed points of $g^2(\cdot,\lambda)$ are the zeros of $G(\cdot,\lambda)$ with $\xi\neq0$.
At $(0,\lambda_\star)$, we have
\begin{enumerate}
\item $G=0$ and $\partial_\lambda G=-2\dnuc$, since $G(0,\lambda)=\partial_\xi g^2(0,\lambda)-1=\nuc(\lambda)^2-1$ and $\nuc(\lambda_\star)=-1$;
\item $\partial_\xi G=0$ and $\partial_\xi^2G=2c$, since $G(\xi,\lambda_\star)=c\,\xi^2+o(\xi^2)$ by Proposition~\ref{prop:c} applied to the center manifold parametrized by $P(\cdot,\lambda_\star)$.
\end{enumerate}
Since $\dnuc\neq0$, the Implicit Function Theorem shows that, near $(0,\lambda_\star)$, the zeros of $G$ form the graph of a $C^2$ function $\xi\mapsto\lambda(\xi)$ with $\lambda(0)=\lambda_\star$.
Taking the first and second derivative of $G(\xi,\lambda(\xi))=0$ at $\xi=0$ gives $\frac{\mathrm{d}\lambda}{\mathrm{d}\xi}(0)=0$ and $\frac{\mathrm{d}^2\lambda}{\mathrm{d}\xi^2}(0)=c/\dnuc$, that is,
\[
\lambda(\xi)=\lambda_\star+\frac{c}{2\dnuc}\,\xi^2+o(\xi^2).
\]
Multiplying by $\dnuc/c$, every zero $(\xi,\lambda)$ of $G$ near $(0,\lambda_\star)$ with $\xi\neq0$ satisfies $\dnuc(\lambda-\lambda_\star)/c=\tfrac12\xi^2+o(\xi^2)>0$.
Hence $g(\cdot,\lambda)$ has no period-$2$ orbit near $0$ unless $\dnuc(\lambda-\lambda_\star)/c>0$.
Conversely, let $\lambda$ be near $\lambda_\star$ with $\dnuc(\lambda-\lambda_\star)/c>0$.
Since $\frac{\mathrm{d}\lambda}{\mathrm{d}\xi}(\xi)=\frac{c}{\dnuc}\,\xi+o(\xi)$, the function $\xi\mapsto\lambda(\xi)$ is strictly monotone on each side of $0$, so $G(\cdot,\lambda)$ has exactly one negative zero $\xi_-$ and one positive zero $\xi_+$ near $0$.
By the first paragraph, $\{\xi_+,g(\xi_+,\lambda)\}$ is a period-$2$ orbit, so $g(\xi_+,\lambda)$ is a negative zero of $G(\cdot,\lambda)$ near $0$, that is, $g(\xi_+,\lambda)=\xi_-$.
Therefore, $\{\xi_-,\xi_+\}$ is the unique period-$2$ orbit of $g(\cdot,\lambda)$ near $0$.

Finally, the stability of this orbit is determined by its eigenvalue $\partial_\xi g^2(\xi_+,\lambda)$.
Differentiating $g^2(\xi,\lambda)=\xi+\xi\,G(\xi,\lambda)$ and using $G(\xi_+,\lambda)=0$ gives
\[
\partial_\xi g^2(\xi_+,\lambda)=1+\xi_+\,\partial_\xi G(\xi_+,\lambda).
\]
Since $G$ is $C^2$ with $\partial_\xi G=0$ and $\partial_\xi^2G=2c$ at $(0,\lambda_\star)$, we have $\partial_\xi G(\xi,\lambda)=2c\,\xi+O(|\lambda-\lambda_\star|)+o(|\xi|)$, and $\lambda-\lambda_\star=O(\xi_+^2)$ on the orbit, so $\partial_\xi G(\xi_+,\lambda)=2c\,\xi_++o(\xi_+)$.
Therefore
\[
\partial_\xi g^2(\xi_+,\lambda)=1+2c\,\xi_+^2+o(\xi_+^2),
\]
and, since $c\neq0$, the orbit is attracting if and only if $c<0$.
\qed

\adresa
Olivier H\'enot, Department of Mathematics, National Taiwan University,
No.~1, Sec.~4, Roosevelt Road, Taipei 106319, Taiwan,
e-mail: \texttt{olivierhenot@ntu.edu.tw};
Jean-Philippe Lessard, Department of Mathematics and Statistics, McGill University,
805 Sherbrooke Street West, Montr\'eal, QC H3A~0B9, Canada,
e-mail: \texttt{jp.lessard@mcgill.ca}

\end{document}